\newtheorem{thm}{Theorem}
\newtheorem{prop}[thm]{Proposition}
\newtheorem{cor}[thm]{Corollary}
\newtheorem{lem}[thm]{Lemma}
\newtheorem{rmk}[thm]{Remark}
\title{Courant-sharp eigenvalues of a two-dimensional torus}
\author{Corentin L\'ena\footnote{Department of Mathematics \emph{Guiseppe Peano}, University of Torino, Via Carlo Alberto, 10, 10123 Turin, Italy 
\texttt{clena@unito.it}}}
\begin{document}
\maketitle
\begin{abstract}
In this paper, we determine, in the case of the Laplacian on the flat two-dimensional torus $(\mathbb{R}/\mathbb{Z})^2\,$, all the eigenvalues 
having an eigenfunction 
which satisfies Courant's theorem 
with equality (Courant-sharp situation). Following the strategy of {\AA}.~Pleijel (1956), the proof is a combination of  a lower bound (\`a la Weyl) of the counting function, with an explicit remainder term, and of a Faber--Krahn inequality for 
domains on the torus (deduced as in B\'erard-Meyer from an isoperimetric inequality), with an explicit upper bound on the area.
\end{abstract}

\paragraph{Keywords.}    Nodal domains, Courant theorem, Pleijel theorem, torus.

\paragraph{MSC classification.}  	35P05, 35P20, 58J50.

\section{Introduction}

Let us first recall two classical results on the eigenvalues and eigenfunctions of the Dirichlet-Laplacian on a bounded domain $\Omega$ in the plane. According to a well-known result by R.~Courant (in \cite{Cou}), an eigenfunction associated with the $k$-th eigenvalue $\lambda_k(\Omega)$ of this operator has at most $k$ nodal domains. In \cite{Pl}, {\AA}.~Pleijel sharpened this result by showing that, for a given domain, an eigenfunction associated with $\lambda_k(\Omega)$ has less than $k$ nodal domains, except for a finite number of indices $k\,$. This  was generalized in \cite{BeMe} by P.~B\'erard and D.~Meyer to the case of a compact Riemannian manifold, with or without boundary, in any dimension. It has been shown by I. Polterovich in \cite{Po}, using estimates from \cite{ToZe}, that the analogous result also holds for the Neumann-Laplacian on a planar domain with a piecewise-analytic boundary.

These results leave open the question of determining, for a given domain or manifold, the cases of equality.  It is stated in \cite{Pl} that when $\Omega$ is a square, equality can only occurs for eigenfunctions having one, two or four nodal domains, associated with the first, the second (which has multiplicity two), and the fourth eigenvalue respectively. The proof in \cite{Pl} is however incomplete and was corrected by P. B\'erard and B. Helffer in   \cite{BeHe}. The cases of an equilateral torus and  an equilateral triangle are investigated in \cite{BeHe15}, and the case of the Neumann-Laplacian on a square is treated in  \cite{HePe}. In this note, we will show that for the flat torus
$(\mathbb{R}/\mathbb{Z})^2\,$, equality holds only for eigenfunctions having one or two nodal domains, respectively associated with the first and the second eigenvalue (this last eigenvalue has multiplicity four). This complements the result \cite[Theorem 7.1]{HH}, which determines the cases of equality for a flat torus $(\mathbb{R}/\mathbb{Z})\times(\mathbb{R}/b\mathbb{Z})$
with $b^2$ irrational. 

Let us give a more precise statement of the above result, and fix some notation that will be used in the following. In the rest of this paper, $\mathbb{T}^2$ stands for the two-dimensional torus
\[\mathbb{T}^2=(\mathbb{R}/\mathbb{Z})^2\]
equipped with the standard flat metric, and $-\Delta_{\mathbb{T}^2}$ stands for the Laplace-Beltrami operator on $\mathbb{T}^2\,$. If $\Omega$ is an open set in $\mathbb{T}^2$ with a piecewise-$C^1$ boundary, we write $(\lambda_k(\Omega))_{k\ge 1}$ for the eigenvalues of $-\Delta_{\mathbb{T}^2}$ on $\Omega$ with a Dirichlet boundary condition on $\partial \Omega\,$, arranged in non-decreasing order and counted with multiplicity. In particular, $\lambda_k(\mathbb{T}^2)$ is the $k$-th eigenvalue of $-\Delta_{\mathbb{T}^2}$ (in that case the boundary is empty). If $u$ is an eigenfunction of $-\Delta_{\mathbb{T}^2}\,$, we call \emph{nodal domains of $u$} the connected components of $\mathbb{T}^2\setminus u^{-1}(\{0\})\,$, and we denote by $\mu(u)$  the cardinal of the set of nodal domains.
With any eigenvalue $\lambda$ of $-\Delta_{\mathbb{T}^2}\,$, we associate the integer
\[\nu(\lambda)=\min\{k\in \mathbb{N}^{*} \,:\,\lambda_k(\mathbb{T}^2)=\lambda \}\,.\]
Following \cite{HHOT}, we say that an eigenvalue $\lambda$ of $-\Delta_{\mathbb{T}^2}$ is a \emph{Courant-sharp eigenvalue of $\mathbb{T}^2$} if there exists an associated eigenfunction $u$ with $\mu(u)=\nu(\lambda)\,$. We will prove the following result.

\begin{thm} \label{thmPlejielTorus} The only Courant-sharp eigenvalues of $\mathbb{T}^2$ are $\lambda_k(\mathbb{T}^2)$ with $k\in\{1,2,3,4,5\}\,$.
\end{thm}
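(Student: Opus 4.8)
The plan is to follow Pleijel's strategy in the quantitative form used by B\'erard--Helffer for the square, exploiting the fact that the spectrum of $-\Delta_{\mathbb{T}^2}$ is completely explicit. The eigenvalues are $4\pi^2(m^2+n^2)$ for $(m,n)\in\ZZ^2$, so the counting function $N(\lambda)=\#\{k:\lambda_k(\mathbb{T}^2)\le\lambda\}$ equals the number of integer lattice points in the disk of radius $\sqrt\lambda/(2\pi)$, while $\nu(\lambda)=\#\{k:\lambda_k(\mathbb{T}^2)<\lambda\}+1$ counts those strictly inside it. First I would record a table of the small eigenvalues with their multiplicities and the values of $\nu$, in particular identifying $\lambda_1(\mathbb{T}^2)=0$ (simple, $\nu=1$) and $\lambda_2(\mathbb{T}^2)=\dots=\lambda_5(\mathbb{T}^2)=4\pi^2$ (multiplicity four, $\nu=2$).

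The two analytic ingredients are an explicit lower bound for $\nu$ and a Faber--Krahn inequality on $\mathbb{T}^2$. For the first, I would use the elementary lattice-point estimate obtained by attaching to each point its unit cell: the cells of the points strictly inside the open disk of radius $r$ cover the disk of radius $r-\tfrac{\sqrt2}{2}$, so with $r=\sqrt\lambda/(2\pi)$ one gets $\nu(\lambda)\ge \pi(r-\tfrac{\sqrt2}{2})^2+1=\tfrac{\lambda}{4\pi}-\tfrac{1}{\sqrt2}\sqrt\lambda+\tfrac\pi2+1$. For the second, following B\'erard--Meyer, I would deduce from the isoperimetric inequality on $\mathbb{T}^2$---which coincides with the Euclidean one as long as the enclosed area stays below the threshold $A_0$ at which the perimeter-minimizing band overtakes the round disk, namely $A_0=1/\pi$---the bound $\lambda_1(D)\,|D|\ge \pi j_0^2$ for every domain $D\subset\mathbb{T}^2$ with $|D|\le A_0$, where $j_0$ is the first positive zero of the Bessel function $J_0$.

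The heart of the argument combines these. Suppose $\lambda=\lambda_k(\mathbb{T}^2)$ is Courant-sharp with $k=\nu(\lambda)$, and let $u$ be an associated eigenfunction with nodal domains $D_1,\dots,D_k$. On each $D_i$ the function $u$ has constant sign, hence is a first Dirichlet eigenfunction and $\lambda_1(D_i)=\lambda$. The key observation is that, as soon as $\lambda\ge \pi^2 j_0^2$ (equivalently $A_0\ge \pi j_0^2/\lambda$), every nodal domain satisfies $|D_i|\ge \pi j_0^2/\lambda$: this holds by Faber--Krahn when $|D_i|\le A_0$, and trivially when $|D_i|>A_0$ since then $|D_i|>A_0\ge \pi j_0^2/\lambda$. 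Summing and using $\sum_i|D_i|=|\mathbb{T}^2|=1$ gives $\nu(\lambda)=\mu(u)\le \lambda/(\pi j_0^2)$. Comparing with the Weyl bound $\nu(\lambda)\ge \tfrac{\lambda}{4\pi}-\tfrac{1}{\sqrt2}\sqrt\lambda+\tfrac\pi2+1$, and noting that $j_0^2>4$ makes the coefficient $\tfrac{1}{4\pi}-\tfrac{1}{\pi j_0^2}$ strictly positive, the two are incompatible once $\lambda$ exceeds an explicit threshold $\Lambda$ found by solving a quadratic inequality in $\sqrt\lambda$; a short computation places $\Lambda$ below $4\pi^2\cdot 16$, leaving only the $\lambda=4\pi^2 s$ with $s$ a sum of two squares and $s\le 15$.

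It then remains to treat this finite list. For every such $s$ with $2\le s\le 15$ one has $\lambda=4\pi^2 s\ge \pi^2 j_0^2$, so Faber--Krahn applies and Courant-sharpness would force $\nu(4\pi^2 s)\le 4\pi s/j_0^2\approx 2.17\,s$; reading $\nu$ off the spectral table shows this fails in every case (for instance $\nu(8\pi^2)=6>4.35$ and $\nu(16\pi^2)=10>8.69$), so none of these eigenvalues is Courant-sharp. The only eigenvalues escaping the Faber--Krahn regime are $\lambda=0$ and $\lambda=4\pi^2$, which I would verify by hand: the constant function has a single nodal domain and $\nu(0)=1$, while $\cos(2\pi x)$ has exactly two nodal domains and $\nu(4\pi^2)=2$, so both are Courant-sharp. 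I expect the main obstacle to be the second ingredient: pinning down the correct threshold $A_0$ from the isoperimetric profile of the flat torus and checking that it is compatible with the lattice-point remainder, so that the resulting window $s\le 15$ is genuinely finite and small. Pleasingly, the potential difficulty that some nodal domains might be too large for the Euclidean isoperimetric inequality is absorbed by the elementary observation above rather than requiring a separate analysis of the nodal set.
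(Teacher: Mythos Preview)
Your proposal is correct and follows the same Pleijel strategy as the paper: an explicit Weyl-type lower bound on the counting function combined with the Faber--Krahn inequality on $\mathbb{T}^2$ (valid for domains of area at most $1/\pi$) reduces the question to a finite table check. The only tactical differences are that the paper applies Faber--Krahn to the \emph{smallest} nodal domain (area $\le 1/k\le 1/4<1/\pi$ once $k\ge4$) rather than summing a lower bound over \emph{all} of them as you do, and that your lattice-point estimate for $\nu(\lambda)$ has a slightly better constant term than the paper's bound for $N(\lambda)$, leaving a marginally smaller window ($s\le15$ versus effectively $s\le16$) to verify by hand; both routes yield the same key inequality $\nu(\lambda)\le \lambda/(\pi j_{0,1}^2)$ and the same conclusion.
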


The proof follows the approach used  by \AA. Pleijel in \cite{Pl} and in the case of a compact manifold by B\'erard-Meyer in \cite{BeMe} (see also \cite{Be1,Be2}). In Section \ref{secIso}, we establish an isoperimetric inequality and we use it to prove a Faber-Krahn type inequality for domains in $\mathbb{T}^2\,$. The inequality that we use is a special case of the more general result in \cite[7]{HoHuMo} but, for the sake of completeness, we give an alternative proof that avoids the use of variational arguments. In Section \ref{secProof}, we combine this information with Weyl's law to show that large eigenvalues cannot be Courant-sharp. In the very simple setting under consideration, we can get explicit version of all these estimates, and we are therefore able to prove Theorem \ref{thmPlejielTorus}. 

\paragraph{Acknowledgements} I want first and foremost to thank Bernard Helffer for introducing me to this problem and for numerous discussions and advices. I also thank Pierre B\'erard for explaining to me the approach used in \cite{BeMe}, pointing out the reference \cite{HoHuMo}, and suggesting several improvements. I thank Virginie Bonnaillie-No\"el for her corrections and  Thibault Paolantini for his help with the topological questions in Section \ref{secIso}. This work was partially supported by the ANR (Agence Nationale de la Recherche), project OPTIFORM n$^\circ$
ANR-12-BS01-0007-02, and by the ERC, project COMPAT n$^\circ$ ERC-2013-ADG.

\section{Two inequalities}
\label{secIso}

\subsection{Isoperimetric inequality}

The first tool in the proof of  Theorem \ref{thmPlejielTorus} is the isoperimetric inequality. We consider open sets $\Omega$ in $\mathbb{T}^2$ that satisfy the following properties:
\begin{enumerate}[(i)]
	\item $\partial \Omega$ is piecewise-$C^1$;
	\item $\Omega$ is \emph{without crack}, that is to say the interior of $\overline{\Omega}$ is equal to $\Omega\,$.
\end{enumerate} 
According to classical regularity results for eigenfunctions of the Laplace-Beltrami operator on surfaces, nodal domains satisfy both properties, and furthermore they are connected by definition. However, since we will apply the isoperimetric inequality to the level sets of eigenfunctions, we need to establish the result without the connectedness assumption. The regularity of the level sets is discussed at the end of Subsection \ref{subsecFaberKrahn}.

If $\Omega$ is an open set in $\mathbb{T}^2$ satisfying (i) and (ii), we write $A(\Omega)$ for its area and $\ell(\partial \Omega)$ for the length of $\partial \Omega\,$. 
The following statement is the first case of \cite[7]{HoHuMo}. For the sake of completeness, we give a proof which differs from the one in \cite{HoHuMo}. Let us however note that the result in \cite{HoHuMo} is more general: all values of $A(\Omega)$ are treated, and no assumption on the regularity of $\partial \Omega$ is made.  

\begin{prop} 
\label{propIsoIneqTorus}
If $A(\Omega)\le \frac{1}{\pi}\,$, we have the inequality
\begin{equation}
	\label{eqIsoIneqTorus}
	\ell(\partial\Omega)^2\ge 4\pi A(\Omega)\,.
\end{equation}
\end{prop}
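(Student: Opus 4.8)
The engine of the argument is the classical planar isoperimetric inequality: every bounded open set $U\subset\RR^2$ with piecewise-$C^1$ boundary satisfies $\ell(\partial U)^2\ge 4\pi A(U)$. The whole point is to transfer this to $\mathbb{T}^2$ under the constraint $A(\Omega)\le\frac1\pi$, the threshold being exactly the area of a flat band $\{0<y<\frac1\pi\}$, for which $\ell^2=4=4\pi A$. The plan is therefore to show that a domain of area at most $\frac1\pi$ is either so thin that it embeds isometrically in the plane (where the inequality is known), or else so ``wrapped around'' that its boundary is already long enough.

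First I would reduce to the case where $\Omega$ is connected. Writing $\Omega=\bigsqcup_i\Omega_i$, the no-crack hypothesis prevents two distinct components from sharing a boundary arc, so $A(\Omega)=\sum_iA(\Omega_i)$ and $\ell(\partial\Omega)=\sum_i\ell(\partial\Omega_i)$; each $A(\Omega_i)\le\frac1\pi$, and the subadditivity $\sum_i\sqrt{a_i}\ge\sqrt{\sum_ia_i}$ lets me recombine the per-component inequalities. By the regularity and no-crack hypotheses, $\partial\Omega$ is a finite disjoint union of piecewise-$C^1$ simple closed curves $\gamma_1,\dots,\gamma_n$, whose homology classes in $H_1(\mathbb{T}^2)=\ZZ^2$ sum to $0$ (they bound the chain $\Omega$). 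The key metric input is that any closed curve of nonzero class $(p,q)$ lifts to a path in $\RR^2$ joining a point to its $(p,q)$-translate, hence has length $\ge\sqrt{p^2+q^2}\ge 1$.

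I then split according to the topology of the boundary. If some $\gamma_j$ is non-contractible, then since the classes sum to $0$ at least two of them are non-contractible, so $\ell(\partial\Omega)\ge 2$ and $\ell(\partial\Omega)^2\ge 4\ge 4\pi A(\Omega)$, using $A(\Omega)\le\frac1\pi$. Otherwise every $\gamma_j$ is contractible, hence bounds a disk in $\mathbb{T}^2$, and I distinguish two situations. If the inclusion $\Omega\hookrightarrow\mathbb{T}^2$ induces the trivial map on $\pi_1$, then $\Omega$ lifts through the universal cover $\RR^2\to\mathbb{T}^2$ to an \emph{embedded} isometric copy $\tilde\Omega\subset\RR^2$ (the lift of an inclusion is injective), and the planar inequality applied to $\tilde\Omega$ yields the result directly. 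If instead $\Omega$ carries an essential loop, I claim its complement $K=\mathbb{T}^2\setminus\Omega$ is a disjoint union of closed disks $\overline{D_1},\dots,\overline{D_m}$: being essential, $\Omega$ must lie on the non-disk side of each contractible $\gamma_j$, which forces the disks $D_j$ to be pairwise disjoint and to exhaust $K$. Then $\sum_jA(D_j)=1-A(\Omega)$ and $\partial\Omega=\bigsqcup_j\partial D_j$, so applying the planar inequality to each disk and using subadditivity of the square root gives
\[\ell(\partial\Omega)=\sum_j\ell(\partial D_j)\ge\sqrt{4\pi\,\textstyle\sum_jA(D_j)}=\sqrt{4\pi\,(1-A(\Omega))}\,.\]
Since $A(\Omega)\le\frac1\pi<\frac12$ we have $1-A(\Omega)\ge A(\Omega)$, whence $\ell(\partial\Omega)^2\ge4\pi A(\Omega)$.

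I expect the main obstacle to be the topological classification in the contractible-boundary case: proving rigorously that a connected domain with only contractible boundary components either lifts to the plane or has complement a disjoint union of disks. This rests on the facts that a null-homotopic simple closed curve on $\mathbb{T}^2$ bounds a unique disk, that two disjoint such disks are either nested or disjoint, and a careful side-of-the-curve argument locating $\Omega$ relative to each $\gamma_j$; it is presumably these points, together with the regularity of the curves $\gamma_j$ deferred to Subsection~\ref{subsecFaberKrahn}, that require the acknowledged topological assistance. The metric estimates, by contrast, reduce to the planar isoperimetric inequality and elementary bookkeeping.
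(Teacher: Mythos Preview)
Your proposal is correct and follows essentially the same strategy as the paper: reduce to the connected case, split according to whether the boundary contains a homologically non-trivial curve (in which case there are at least two, each of length $\ge 1$), and in the all-trivial case further split into a sub-case where $\Omega$ lifts to the plane and one where the complement is a union of disks. The only notable difference is that your sub-case split uses $\pi_1$-triviality of the inclusion $\Omega\hookrightarrow\mathbb{T}^2$ rather than the paper's ``$\Omega$ homeomorphic to a disk'', which is a mild refinement (it handles, e.g., a small planar annulus directly via the lifting argument rather than forcing it into the complement case); otherwise the arguments coincide.
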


Let us first remark that without loss of generality, we can assume that $\Omega$ is connected. Indeed, let us assume that Inequality \eqref{eqIsoIneqTorus} has been proved for connected sets, and let $\Omega$ be a general open set satisfying (i) and (ii), with $A(\Omega)\le \frac{1}{\pi}\,$. We can write 
\[\Omega=\bigcup_{j=1}^K\Omega_j\,,\]
where $K$ is some integer and the $\Omega_j$'s are the connected components of $\Omega\,$. The $\Omega_j$'s satisfy (i) and (ii), and since
\[A(\Omega)=\sum_{j=1}^K A(\Omega_j)\,,\]
 $A(\Omega_j)\le \frac{1}{\pi}$ for each $j\,$. By hypothesis,
we have
\[\ell(\partial \Omega_j)^2\ge 4\pi A(\Omega_j)\]
for each $j\,$. From assumption (ii), we have
\[\ell(\partial \Omega)^2=\left(\sum_{j=1 }^K\ell(\partial \Omega_j)\right)^2\ge \sum_{j=1}^K\ell(\partial \Omega_j)^2\,.\]
Summing all the inequalities, we obtain \eqref{eqIsoIneqTorus}.

In the rest of the proof, we assume that $\Omega$ is connected. Let us then remark that $\partial \Omega$ is the disjoint union of a finite number of simple curves embedded in $\mathbb{T}^2$:
\begin{equation}
	\label{eqBoundary}
	\partial \Omega=\bigcup_{i=1}^N C_i\,.
\end{equation}

To establish Inequality \eqref{eqIsoIneqTorus}, we distinguish between the following cases, where the term \emph{trivial} stands for \emph{zero-homologous}:
\begin{enumerate}
	\item every curve $C_i$ is trivial;
	\item at least one curve $C_{i_0}$ is  non-trivial.
\end{enumerate}

\subsubsection{All the curves are trivial}

We distinguish between two sub-cases:
\begin{enumerate}
	\item $\Omega$ is homeomorphic to a disk;
	\item $\Omega$ is not homeomorphic to a disk. 
\end{enumerate}
In sub-case 1, $N=1\,$. In sub-case 2,  $\mathbb{T}^2\setminus \Omega$ is the disjoint union of a finite number of regions that are homeomorphic to a disk.

\paragraph{Sub-case 1: $\Omega$ is homeomorphic to a disk} ~\\

We consider the covering of $\mathbb{T}^2$ by the plane $\mathbb{R}^2$ defined as 
\[\begin{array}{cccc}
			\Pi:&\mathbb{R}^2&\to & \mathbb{T}^2\\
			    &(x,y)       &\mapsto & (x \mbox{ mod }1, y \mbox{ mod } 1)\\ 
  \end{array}\]
  (this is the universal covering).
  The pullback $\Pi^{-1}(\Omega)$ is  the union of an infinite number of connected components, each one being homeomorphic to a disk and having the same area and perimeter as $\Omega\,$. Applying the classical isoperimetric inequality in the plane to one of these components, we obtain \eqref{eqIsoIneqTorus}.
  
  \begin{rmk} Let us note that the inequality holds in this case whether or not a disk of area $A(\Omega)$ can be contained in the torus $\mathbb{T}^2$. 
  \end{rmk} 
  
  \paragraph{Sub-case 2: $\Omega$ is not homeomorphic to a disk} ~\\
  
  We write 
  \[\mathbb{T}^2\setminus \Omega =\bigcup_{i=1}^N D_i\,,\]
  where $N$ is the same integer as in Equation \eqref{eqBoundary} and $D_i$ is homeomorphic to a disk, with $\partial D_i= C_i\,$.\\
  We have 
  \[\sum_{i=1}^NA(D_i)=1-A(\Omega)\,.\]
  and 
  \[\ell(\partial\Omega)=\sum_{i=1}^N\ell(C_i)\,.\]
  We apply the former case of the isoperimetric inequality to each $D_i\,$, and obtain
  \[\ell(C_i)^2\ge 4\pi A(D_i)\, .\]
  Summing all the inequalities, we get 
  \[\ell(\partial \Omega)^2=\left(\sum_{i=1}^N\ell(C_i)\right)^2\ge \sum_{i=1}^N\ell(C_i)^2\ge 4\pi\sum_{i=1}^N A(D_i)=4\pi(1-A(\Omega))\,.\]
  If $1-A(\Omega)\ge 
  A(\Omega)\,$, that is to say if 
  $A(\Omega)\le \frac{1}{2}\,$, we obtain \eqref{eqIsoIneqTorus}.
  
  \subsubsection{At least one curve is non-trivial}
  
  Let us first point out that in this case, there are actually two non-trivial curves. We give one possible proof of this topological fact, using Stokes's formula. 
  
  Let us first introduce some notation. We use $x$ and $y$ to denote the coordinate functions $(x,y)\mapsto x$ and $(x,y)\mapsto y$ on $\mathbb{R}^2$. We will use the two $1$-forms on $\mathbb{T}^2$ obtained as the push-forwards of $dx$ and $dy$ by the mapping $\Pi\,$, which we will also write $dx$ and $dy\,$. If $C$ is a piecewise-$C^1$, closed, simple, and oriented curve on $\mathbb{T}^2\,$, both numbers
  \begin{equation*}
  	p=\int_{C}dx \mbox{ and } q=\int_{C}dy
  \end{equation*}
  are integers, characteristic of the homology class of $C$. In particular, $C$ is zero-homologous if, and only if, $p=q=0\,$. 
  
  According to Stokes's formula,
  \begin{equation*}
  	\sum_{i=1}^N\int_{C_i}dx=\int_{\partial\Omega}dx=\int_{\Omega}d(dx)=0\,,
  \end{equation*}
  and similarly for $dy\,$,
  \begin{equation*}
  	\sum_{i=1}^N\int_{C_i}dy=\int_{\partial\Omega}dy=\int_{\Omega}d(dy)=0\,.
  \end{equation*}
  
  If one of the curves in the boundary is non-trivial, at least one term in one of the sums is non-zero, and thus some other term in the same sum must be non-zero, meaning that some other curve must be non-trivial.
  
  We remark finally that a non-trivial curve has length at least $1$. Since $\partial \Omega$ contains at least two non-trivial curves, $\ell(\partial \Omega)\ge 2\,$. If $A(\Omega)\le \frac{1}{\pi}\,$, this implies
  \[\ell(\partial \Omega)^2\ge 4\pi A(\Omega)\,.\]
  
  Since $\frac{1}{\pi}<\frac{1}{2}\,$, all the cases taken together yield Proposition \ref{propIsoIneqTorus}.

  \subsection{Faber-Krahn inequality}
  \label{subsecFaberKrahn}
  
  As shown for instance by P. B\'erard and D. Meyer  in \cite{BeMe}, an isoperimetric inequality can be used to obtain a Faber-Krahn inequality. The form of this inequality is here particularly simple.
  
  \begin{prop} 
  \label{propFaberKrahn}
 If $\Omega$ is an open and connected set in $\mathbb{T}^2$, without crack and with a piecewise $C^1$-boundary $\partial \Omega$ such that $A(\Omega)\le \frac{1}{\pi}\,$, then 
 \begin{equation}
 \label{eqIneqFaberKrahn}
 \lambda_1(\Omega)A(\Omega)\ge \pi j_{0,1}^2.
 \end{equation}
\end{prop}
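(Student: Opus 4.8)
The plan is to follow the Faber--Krahn argument of B\'erard--Meyer, replacing the Euclidean isoperimetric inequality by the one on the torus established in Proposition~\ref{propIsoIneqTorus}. Since $\Omega$ is connected, I can take a first Dirichlet eigenfunction $u$ associated with $\lambda_1(\Omega)$ that is positive on $\Omega$, and write
\[
\lambda_1(\Omega)=\frac{\int_\Omega |\nabla u|^2\,dA}{\int_\Omega u^2\,dA}\,.
\]
For $t\ge 0$ set $\Omega_t=\{x\in\Omega: u(x)>t\}$ and let $a(t)=A(\Omega_t)$ be its area; this is a non-increasing function with $a(0)=A(\Omega)\,$. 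The key geometric object is the Schwarz symmetrisation: I introduce the disk $\Omega^\star\subset\mathbb{R}^2$ of area $A(\Omega)\,$, and the radial rearrangement $u^\star$ on $\Omega^\star$ whose super-level set $\{u^\star>t\}$ is the concentric disk of area $a(t)\,$. By construction $u^\star$ is equimeasurable with $u\,$, so $\int_{\Omega^\star}(u^\star)^2\,dA=\int_\Omega u^2\,dA\,$.

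The heart of the proof is the gradient inequality $\int_\Omega|\nabla u|^2\,dA\ge \int_{\Omega^\star}|\nabla u^\star|^2\,dA\,$. First I would invoke the coarea formula to write
\[
\int_\Omega|\nabla u|^2\,dA=\int_0^{\max u}\left(\int_{\{u=t\}}|\nabla u|\,d\sigma\right)dt\,,\qquad
-a'(t)=\int_{\{u=t\}}\frac{d\sigma}{|\nabla u|}\,.
\]
Cauchy--Schwarz on the level curve gives $\ell(\{u=t\})^2\le \bigl(\int_{\{u=t\}}|\nabla u|\,d\sigma\bigr)\,(-a'(t))\,$, hence a lower bound for the inner integral in terms of $\ell(\{u=t\})^2/(-a'(t))\,$. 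Now the crucial point: since $\Omega_t\subset\Omega$ we have $a(t)\le A(\Omega)\le \tfrac1\pi\,$, so Proposition~\ref{propIsoIneqTorus} applies to $\Omega_t$ and yields $\ell(\{u=t\})^2=\ell(\partial\Omega_t)^2\ge 4\pi a(t)\,$. Combining,
\[
\int_{\{u=t\}}|\nabla u|\,d\sigma\ge\frac{4\pi a(t)}{-a'(t)}\,.
\]
For the radial rearrangement every inequality above is an equality (the level sets of $u^\star$ are circles, for which the isoperimetric inequality is sharp), so the same computation gives $\int_{\{u^\star=t\}}|\nabla u^\star|\,d\sigma=4\pi a(t)/(-a'(t))\,$. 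Integrating in $t$ produces the desired gradient comparison.

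Putting the two pieces together, $u^\star\in H^1_0(\Omega^\star)$ is an admissible test function for the Dirichlet problem on the Euclidean disk $\Omega^\star\,$, so
\[
\lambda_1(\Omega)=\frac{\int_\Omega|\nabla u|^2}{\int_\Omega u^2}
\ge\frac{\int_{\Omega^\star}|\nabla u^\star|^2}{\int_{\Omega^\star}(u^\star)^2}\ge \lambda_1(\Omega^\star)\,.
\]
It remains to evaluate $\lambda_1(\Omega^\star)\,$. For a Euclidean disk of radius $R$ the first Dirichlet eigenvalue is $j_{0,1}^2/R^2\,$, and here $\pi R^2=A(\Omega)\,$, so $\lambda_1(\Omega^\star)=\pi j_{0,1}^2/A(\Omega)\,$. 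This gives exactly \eqref{eqIneqFaberKrahn}. The main obstacle I anticipate is not the chain of inequalities but the regularity and measure-theoretic justification of the symmetrisation: one must ensure that for almost every $t$ the level set $\{u=t\}$ is a nice one-dimensional curve and that $\Omega_t$ satisfies hypotheses (i)--(ii) of Proposition~\ref{propIsoIneqTorus}, so that the isoperimetric inequality genuinely applies at each level. This requires Sard's theorem to discard the (measure-zero) set of critical values, the almost-everywhere differentiability of the monotone function $a\,$, and the validity of the coarea formula for $u\,$; the analyticity of eigenfunctions of $-\Delta_{\mathbb{T}^2}$ makes these statements available, and indeed the excerpt defers precisely this point (the regularity of level sets) to the end of the subsection.
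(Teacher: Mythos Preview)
Your proposal is correct and follows essentially the same route as the paper: Schwarz symmetrisation \`a la B\'erard--Meyer, using Proposition~\ref{propIsoIneqTorus} on each super-level set $\Omega_t$ (which has area at most $A(\Omega)\le 1/\pi$) to obtain the P\'olya--Szeg\H{o} gradient comparison, and then invoking the explicit first eigenvalue of the Euclidean disk. The only noteworthy difference is in the handling of the regularity of the level sets: you appeal to the analyticity of eigenfunctions and Sard's theorem, whereas the paper instead invokes the approximation argument of \cite{BeMe}, perturbing $-\Delta$ by a small potential so that (by Uhlenbeck's genericity results) the first eigenfunction becomes a Morse function with finitely many critical points; both devices serve the same purpose of ensuring Proposition~\ref{propIsoIneqTorus} applies to almost every $\Omega_t$.
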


The constant $j_{0,1}$  in Equation \eqref{eqIneqFaberKrahn} is the first positive zero of the Bessel function of the first type $J_0\,$. Let us note that $\pi j_{0,1}^2$ is the value of the product $\lambda_1(D)|D|\,$, where $D$ is a disk in the plane $\mathbb{R}^2$ (any disk, since the expression is invariant by scaling).

As in the planar case, the proof uses Schwarz symmetrization of the level sets $\Omega_t=\{x\,:\,u(x)>t\}\,$, with $u$ a positive eigenfunction associated with $\lambda_1(\Omega)$ and $t>0\,$. We go through the same steps as in \cite[I.9]{BeMe}, or \cite[III.3]{Ch}.  

To apply the symmetrization method in this case, we have to check whether a (geodesic) disk of area less than $A(\Omega_t)$ can be contained on the torus $\mathbb{T}^2$, that is to say whether its radius is no larger than $\frac{1}{2}\,$. Since $A(\Omega_t)\le A(\Omega)\le \frac{1}{\pi}$ and since a disk of area $\frac{1}{\pi}$ has radius $\frac{1}{\pi}<\frac{1}{2}\,$, this is indeed the case. 

It is possible that for some values of $t>0\,$, the set $\Omega_t$ does not satisfy Properties (i) and (ii), but we can avoid any potential problem thanks to the approximation argument presented in \cite[I.9]{BeMe}.  In this paper, the authors use the genericity  results of \cite{U} to show that an arbitrarily small zero-order perturbation of $-\Delta$ gives an operator whose first eigenfunctions are Morse functions in $\Omega$ with a finite number of critical points.  

\section{Proof of the main theorem}
	\label{secProof} 

\subsection{Weyl's law with explicit bounds}

Let us now go back to the study of the sequence $(\lambda_k(\mathbb{T}^2))_{k\ge 1}\,$. For $\lambda\ge 0\,$, we write 
\[N(\lambda)=\sharp\{k\,:\,\lambda_k(\mathbb{T}^2)\le \lambda\}\]
(this is the \emph{counting function}).  According to Weyl's law, which holds for any compact Riemannian manifold, we have
\begin{equation*}
N(\lambda)\sim \frac{1}{(2\pi)^2}\omega_2\, A(\mathbb{T}^2)\lambda \mbox{ as }  \lambda \to +\infty \,,
\end{equation*}
where $\omega_2$ is the area of a disk of radius $1$. We obtain
\begin{equation}
\label{eqWeylsLaw}
N(\lambda)\sim \frac{\lambda}{4\pi} \mbox{ as }  \lambda \to +\infty \,.
\end{equation}
\\
To reach our goal, we will need to replace Formula \eqref{eqWeylsLaw} with an explicit lower bound for $N(\lambda)$.

\begin{prop} We have, for all $\lambda\ge 0\,$,
\begin{equation}
\label{eqWeylExplicit}
\frac{\lambda}{4\pi}-\frac{2\sqrt{\lambda}}{\pi}-3\le N(\lambda)\,.
\end{equation}
\end{prop}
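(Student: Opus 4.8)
The plan is to exploit the fact that the spectrum of $-\Delta_{\mathbb{T}^2}$ is known explicitly. The eigenfunctions are the characters $(x,y)\mapsto e^{2\pi i (mx+ny)}$ with $(m,n)\in\mathbb{Z}^2$, with associated eigenvalue $4\pi^2(m^2+n^2)$. Hence, counting with multiplicity,
\[ N(\lambda)=\sharp\Big\{(m,n)\in\mathbb{Z}^2 \,:\, m^2+n^2\le R^2\Big\}, \qquad R:=\frac{\sqrt{\lambda}}{2\pi}, \]
so that $N(\lambda)$ is exactly the number of integer points in the closed disk of radius $R\,$. Since $\pi R^2=\frac{\lambda}{4\pi}$ and $4R=\frac{2\sqrt{\lambda}}{\pi}\,$, the claimed bound \eqref{eqWeylExplicit} is equivalent to the purely geometric estimate $N(\lambda)\ge \pi R^2-4R-3\,$, and I would in fact aim at the slightly stronger $N(\lambda)\ge \pi R^2-4R-1\,$.

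To produce a lower bound I would slice the disk by the vertical lines $x=m\,$. For each integer $m$ with $|m|\le R$ the number of integers $n$ with $n^2\le R^2-m^2$ is $2\lfloor\sqrt{R^2-m^2}\rfloor+1\ge 2\sqrt{R^2-m^2}-1\,$, using $\lfloor t\rfloor\ge t-1\,$. Writing $M:=\lfloor R\rfloor$ and summing over $m$ gives
\[ N(\lambda)\ge 2\sum_{m=-M}^{M}\sqrt{R^2-m^2}-(2M+1). \]
The two error contributions visible here, the term $2M+1\le 2R+1$ coming from the floor and the defect between the sum and the area, are each of order $R\,$; together they account for the coefficient $4$ in front of $R\,$.

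The heart of the argument, and the step I expect to require the most care, is the comparison of $S:=\sum_{m=-M}^{M}\sqrt{R^2-m^2}$ with the integral $\int_{-R}^{R}\sqrt{R^2-x^2}\,dx=\frac{\pi R^2}{2}\,$. Assuming $R\ge 1\,$, I would let $g(x):=\sqrt{R^2-x^2}$ and extend it by zero outside $[-R,R]\,$; since $g$ is nonincreasing on $[0,R]$ one has $g(m)\ge\int_{m}^{m+1}g$ for each $m\ge 1$ (this remains valid for the boundary index $m=M\,$, precisely because of the extension by zero), whence $\sum_{m=1}^{M}g(m)\ge\int_{1}^{R}g=\frac{\pi R^2}{4}-\int_0^1 g\ge \frac{\pi R^2}{4}-R\,$. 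Adding the central term $g(0)=R$ yields $S\ge \frac{\pi R^2}{2}-R\,$, and therefore
\[ N(\lambda)\ge 2\Big(\tfrac{\pi R^2}{2}-R\Big)-(2R+1)=\pi R^2-4R-1. \]
The delicate points are exactly the boundary intervals near $x=\pm R\,$, where a naive midpoint or tangent estimate fails but where monotonicity together with the extension by zero gives the bound cleanly, and the standing hypothesis $R\ge 1\,$.

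Finally, I would treat the remaining range $R<1$ (equivalently $\lambda<4\pi^2\,$) separately: there the only lattice point in the disk is the origin, so $N(\lambda)=1\,$, while a direct check shows that $\frac{\lambda}{4\pi}-\frac{2\sqrt{\lambda}}{\pi}-3<1$ throughout $[0,4\pi^2)$ (the left-hand side is in fact negative on this interval). This trivial case, combined with the slicing estimate for $R\ge 1\,$, establishes \eqref{eqWeylExplicit} for all $\lambda\ge 0\,$.
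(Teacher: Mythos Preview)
Your argument is correct, including the careful handling of the boundary indices and the separate treatment of $R<1$. However, it is genuinely different from the paper's proof, which is a short covering argument rather than a slicing one. The paper works in the first quadrant: setting $n(\lambda)=\sharp\big(\mathbb{N}^2\cap\mathcal{R}_\lambda\big)$ with $\mathcal{R}_\lambda$ the quarter-disk of radius $R=\frac{\sqrt{\lambda}}{2\pi}$, it records the exact relation $N(\lambda)=4n(\lambda)-4\lfloor R\rfloor-3$, then observes that the union of unit squares $[m,m+1]\times[n,n+1]$ over all $(m,n)\in\mathbb{N}^2\cap\mathcal{R}_\lambda$ covers $\mathcal{R}_\lambda$, giving $n(\lambda)\ge\frac{\lambda}{16\pi}$ in one stroke; substituting yields \eqref{eqWeylExplicit} for every $\lambda\ge 0$ without any case distinction. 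Compared with your approach, the paper's route is shorter and avoids the integral-comparison step and the separate small-$\lambda$ case, while your slicing argument is a bit more laborious but wins a slightly better additive constant (you obtain $\pi R^2-4R-1$ instead of $\pi R^2-4R-3$). Either approach is adequate for the application that follows.
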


\begin{proof}
	The eigenvalues of $-\Delta_{\mathbb{T}^2}$ are of  the form 
	\[\lambda_{m,n}=4\pi^2(m^2+n^2)\,,\]
	with $(m,n)\in \mathbb{N}^2\,$. \\
	With each pair  of integers $(m,n)$ we associate a finite dimensional space $E_{m,n}$ of eigenfunctions such that 
	\[L^2(\mathbb{T}^2)=\overline{\bigoplus_{(m,n)}E_{m,n}}\,.\]  
	
The vector space $E_{m,n}$ is generated by products of trigonometric functions, see for instance the proof of \cite[Theorem 2.2]{HH} for details. The dimension of $E_{m,n}$  equals
	\begin{itemize}
		\item $1$ if $(m,n)=(0,0)$\,;
		\item $2$ if either $m$ or $n$, but not both, is $0$\,;
		\item $4$ if $m>0$ and $n>0$\,.
	\end{itemize}

	For $r\ge 0\,$, let us write $D\left(0,r\right)$ for the closed disk in $\mathbb{R}^2$ of center $0$ and radius $r\,$. We denote by $\mathcal{R}_{\lambda}$ the set
	\[D\left(0,\frac{\sqrt{\lambda}}{2\pi}\right)\cap \left\{(x,y)\in \mathbb{R}^2\,:\, x\ge 0 \mbox{ and } y\ge 0\right\}\,.\] 
	and we write
	\[n(\lambda)=\sharp\left(\mathbb{N}^2\cap\mathcal{R}_{\lambda}\right)\,.\]
	Taking the dimension of the spaces $E_{m,n}$ into account, we have the following exact formula for the counting function:
	\begin{equation}
	\label{eqCountingFunction}
	N(\lambda)=4n(\lambda)-4\left\lfloor \frac{\sqrt{\lambda}}{2\pi}\right\rfloor-3\,.
	\end{equation}
	Let us now obtain a lower bound for $n(\lambda)$. To each point $(m,n)$ in $\mathcal{R}_{\lambda}\,$, we associate the square $[m,m+1]\times[n,n+1]\,$, of area $1$. We define
	\[\mathcal{S}_{\lambda}=\bigcup_{(m,n)\in\left(\mathbb{N}^2\cap\mathcal{R}_{\lambda}\right)}[m,m+1]\times[n,n+1]\,.\]
	The areas of $\mathcal{S}_{\lambda}$ and $\mathcal{R}_{\lambda}$ are  $n(\lambda)$ and $\frac{\lambda}{16\pi}$ respectively. Since $\mathcal{R}_{\lambda}\subset\mathcal{S}_{\lambda}$,
	we have
	\begin{equation}
	\label{eqIneqLattice}
	\frac{\lambda}{16\pi}\le n(\lambda)\,.
	\end{equation}
	Using Inequality \eqref{eqIneqLattice} and Equation \eqref{eqCountingFunction}, we obtain Inequality \eqref{eqWeylExplicit}.
\end{proof}

\subsection{Courant-sharp eigenvalues on the torus}

We now turn to the proof Theorem \ref{thmPlejielTorus}. We will use the following lemmas.

\begin{lem} 
\label{lemPleijel}
If $\lambda$ is an eigenvalue of  $-\Delta_{\mathbb{T}^2}$ that has an associated eigenfunction $u$ with $k$ nodal domains, and if $k\ge 4\,$, 
\[\pi j_{0,1}^2 k\le \lambda \,.\]
\end{lem}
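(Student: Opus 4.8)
The plan is to combine the Faber--Krahn inequality (Proposition \ref{propFaberKrahn}) with the pigeonhole observation that, among $k$ nodal domains partitioning $\mathbb{T}^2$, at least one has small area. Let $u$ be an eigenfunction for $\lambda$ with $k\ge 4$ nodal domains $\Omega_1,\dots,\Omega_k$. Since the nodal domains are disjoint and their closures cover $\mathbb{T}^2$, we have $\sum_{j=1}^k A(\Omega_j)=A(\mathbb{T}^2)=1$. By the pigeonhole principle there exists an index $j_0$ with $A(\Omega_{j_0})\le \frac{1}{k}$.

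First I would check that this smallest nodal domain satisfies the hypotheses of Proposition \ref{propFaberKrahn}. The crucial point is the area bound: because $k\ge 4$, we get $A(\Omega_{j_0})\le \frac{1}{k}\le \frac{1}{4}<\frac{1}{\pi}$, so the Faber--Krahn inequality applies to $\Omega_{j_0}$. The remaining hypotheses (open, connected, without crack, piecewise-$C^1$ boundary) hold because nodal domains of eigenfunctions are connected by definition and, as noted in Subsection \ref{subsecFaberKrahn}, enjoy the required regularity by the classical results for eigenfunctions of the Laplace--Beltrami operator.

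Next I would invoke the standard fact that a nodal domain $\Omega_{j_0}$ of an eigenfunction associated with $\lambda$ has $\lambda$ as its \emph{first} Dirichlet eigenvalue, that is $\lambda_1(\Omega_{j_0})=\lambda$. Indeed, the restriction $u|_{\Omega_{j_0}}$ vanishes on $\partial\Omega_{j_0}$ and has constant sign on $\Omega_{j_0}$, so it is a first Dirichlet eigenfunction there. Applying Proposition \ref{propFaberKrahn} to $\Omega_{j_0}$ then gives
\[
\pi j_{0,1}^2 \le \lambda_1(\Omega_{j_0})\,A(\Omega_{j_0}) = \lambda\,A(\Omega_{j_0}) \le \frac{\lambda}{k}\,,
\]
and multiplying through by $k$ yields $\pi j_{0,1}^2\, k\le \lambda$, which is the desired inequality.

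The only genuinely delicate point, rather than an obstacle, is the justification that $\lambda_1(\Omega_{j_0})=\lambda$ together with the regularity needed to apply the Faber--Krahn inequality; both rely on the classical theory of nodal domains and the regularity of nodal sets already referenced in the paper. Everything else is elementary: the area-splitting identity uses that the nodal set has measure zero, and the pigeonhole step is immediate. I would therefore expect the write-up to be short, with the hypothesis $k\ge 4$ entering precisely to secure $A(\Omega_{j_0})\le \frac{1}{k}\le\frac{1}{\pi}$ so that Proposition \ref{propFaberKrahn} is legitimately available.
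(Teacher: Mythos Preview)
Your argument is correct and matches the paper's proof essentially line for line: pick a nodal domain of area at most $1/k$, use $k\ge 4$ to ensure the area bound $1/k\le 1/\pi$, and apply the Faber--Krahn inequality (Proposition~\ref{propFaberKrahn}) together with $\lambda_1(\Omega_{j_0})=\lambda$. The only difference is that you spell out the regularity and first-eigenvalue justifications a bit more explicitly than the paper does.
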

\begin{proof} Since $A(\mathbb{T}^2)=1\,$, one of the nodal domains of $u$ has an area no larger than $\frac{1}{k}\,$. Let us denote this nodal domain by $D\,$. Since $k\ge 4\,$, $A(D)\le \frac{1}{\pi}\,$. According to Proposition \ref{propFaberKrahn},
\[\lambda=\lambda_1(D)\ge \frac{\pi j_{0,1}^2}{A(D)}\ge \pi j_{0,1}^2 k\,.\]
\end{proof}

\begin{cor}
 \label{corCourantSharp}
 If $\lambda$ is a Courant-sharp eigenvalue of $\mathbb{T}^2\,$ with $\nu(\lambda)\ge 4\,$,  
\begin{equation}
\label{eqIneqCourantSharp}
\pi j_{0,1}^2 \nu(\lambda) \le\lambda\,.
\end{equation}
\end{cor}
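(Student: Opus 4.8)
The plan is to deduce Corollary \ref{corCourantSharp} directly from Lemma \ref{lemPleijel} by unwinding the definition of a Courant-sharp eigenvalue. By hypothesis, $\lambda$ is Courant-sharp, so by the definition recalled in the introduction there exists an eigenfunction $u$ associated with $\lambda$ whose number of nodal domains satisfies $\mu(u)=\nu(\lambda)$. The whole content of the corollary is to feed this particular eigenfunction into the lemma.

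First I would set $k=\nu(\lambda)$ and observe that the assumption $\nu(\lambda)\ge 4$ is exactly the hypothesis $k\ge 4$ required by Lemma \ref{lemPleijel}. Next I would note that the eigenfunction $u$ supplied by Courant-sharpness has $\mu(u)=\nu(\lambda)=k$ nodal domains, so the lemma applies to $u$ with this value of $k$. Applying Lemma \ref{lemPleijel} then yields
\[
\pi j_{0,1}^2\,\nu(\lambda)=\pi j_{0,1}^2\,k\le \lambda\,,
\]
which is precisely Inequality \eqref{eqIneqCourantSharp}.

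There is essentially no obstacle here: the corollary is a bookkeeping reformulation of the lemma, replacing the generic nodal-domain count $k$ by the specific value $\nu(\lambda)$ realized by a Courant-sharp eigenfunction. The only point worth stating explicitly is why such an eigenfunction exists, namely that this is the definition of Courant-sharpness, and why its nodal count equals $\nu(\lambda)$ rather than merely being bounded by it, which is again built into the definition via the condition $\mu(u)=\nu(\lambda)$. Once these identifications are made, the inequality transfers verbatim.

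I expect the corollary to be stated and proved in one or two lines, with the role of $\nu(\lambda)$ being to record the minimal index at which $\lambda$ appears in the spectrum; this is the quantity that will later be compared against the explicit Weyl lower bound \eqref{eqWeylExplicit} to rule out large Courant-sharp eigenvalues. The value of isolating the corollary, as opposed to using the lemma directly, is that it phrases the Faber--Krahn consequence in terms of the spectral index $\nu(\lambda)$, which is the natural quantity to confront with the counting function in the final argument.
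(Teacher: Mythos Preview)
Your proposal is correct and matches the paper's approach: the corollary is stated as an immediate consequence of Lemma~\ref{lemPleijel}, obtained by taking $k=\nu(\lambda)$ and using the defining property of a Courant-sharp eigenvalue that some eigenfunction has exactly $\nu(\lambda)$ nodal domains. The paper does not even write out a proof for this corollary, so your explicit unwinding of the definition is, if anything, more detailed than what appears there.
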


\begin{lem} 
\label{lemUpperLambda}
For all all $k\in \mathbb{N}\,$,
\begin{equation*}
 	\lambda_k(\mathbb{T}^2)\le \left(4+2\sqrt{4+\pi(k+3)}\right)^2\,.
\end{equation*}
\end{lem}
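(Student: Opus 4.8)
The plan is to invert the explicit lower bound \eqref{eqWeylExplicit} on the counting function. The key observation is that if $N(\lambda)\ge k$ for some $\lambda\ge 0\,$, then at least $k$ eigenvalues of $-\Delta_{\mathbb{T}^2}$ are no larger than $\lambda\,$, and hence $\lambda_k(\mathbb{T}^2)\le \lambda\,$. So it suffices to exhibit a value $\lambda^*$ for which the right-hand side of \eqref{eqWeylExplicit} is at least $k\,$, and then to check that $\lambda^*$ coincides with the quantity in the statement.

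First I would set $t=\sqrt{\lambda}$ and determine the values of $t$ that make the lower bound reach $k\,$, that is, I would solve
\[
\frac{t^2}{4\pi}-\frac{2t}{\pi}-3\ge k\,.
\]
Multiplying by $4\pi\,$, this is equivalent to $t^2-8t-4\pi(k+3)\ge 0\,$. Since the discriminant of the associated quadratic is $64+16\pi(k+3)=16\big(4+\pi(k+3)\big)\,$, its positive root is
\[
t=4+2\sqrt{4+\pi(k+3)}\,,
\]
and the inequality holds precisely for $t$ at least this value.

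Next I would set $\lambda^*=\big(4+2\sqrt{4+\pi(k+3)}\big)^2\,$, so that $\sqrt{\lambda^*}$ equals the positive root above and the right-hand side of \eqref{eqWeylExplicit} evaluated at $\lambda^*$ equals exactly $k\,$. Applying \eqref{eqWeylExplicit} then yields $N(\lambda^*)\ge k\,$, and the observation of the first paragraph gives $\lambda_k(\mathbb{T}^2)\le \lambda^*\,$, which is the claim.

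I do not expect any serious obstacle: once \eqref{eqWeylExplicit} is in hand, the argument reduces to a single one-variable quadratic computation together with the elementary monotonicity relation between the counting function and the ordered eigenvalues. The only points requiring a little care are the logical direction — that a lower bound on $N$ produces an upper bound on $\lambda_k$ — and the verification that the chosen $\lambda^*$ makes the affine-in-$k$ right-hand side equal to $k$ exactly, so that no loss is incurred.
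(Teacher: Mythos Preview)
Your proposal is correct and follows exactly the same approach as the paper: the paper's proof simply notes that $N(\lambda)\ge k$ implies $\lambda_k(\mathbb{T}^2)\le\lambda$ and then invokes the lower bound \eqref{eqWeylExplicit}, leaving the quadratic inversion implicit. You have merely made that inversion explicit, and your computation is accurate.
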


\begin{proof} The proof is immediate from the following remark: if $\lambda$  is  a non-negative number such that $N(\lambda)\ge k\,$, then $\lambda_k(\mathbb{T}^2)\le \lambda\,$.  The lower bound for $N(\lambda)$ given in Inequality \eqref{eqWeylExplicit} then implies the desired upper bound.
\end{proof}

A direct computation shows that if 
\[k>\frac{\left(4j_{0,1}+2\sqrt{4j_{0,1}^2+3\pi(j_{0,1}^2-4)}\right)^2}{\pi(j_{0,1}^2-4)^2}\simeq 49.5973\,,\]
we have
\[\left(4+2\sqrt{4+\pi(k+3)}\right)^2<\pi j_{0,1}^2 k\,.\]
Lemma \ref{lemUpperLambda} and Corollary \ref{corCourantSharp} then show that if $\lambda$ is an eigenvalue of $-\Delta_{\mathbb{T}^2}$ with $\nu(\lambda)\ge 50\,$, $\lambda$ is not Courant-sharp. 
Table \ref{tabEigVal} gives the first fifty-seven eigenvalues of  $-\Delta_{\mathbb{T}^2}\,$.
\begin{table}
\centering
	\begin{tabular}{|c|c|c|c|}
	\hline
	$\frac{\lambda}{4\pi^2}$&indices&multiplicity&sum of multiplicities\\\hline
	$0$& $(0,0)$ & $1$ & $1$\\
	$1$& $(1,0)$, $(0,1)$ & $4$ & $5$\\
	$2$& $(1,1)$ & $4$ & $9$\\  
	$4$& $(2,0)$, $(0,2)$ & $4$ & $13$\\
	$5$& $(2,1)$, $(1,2)$ & $8$ & $21$\\
	$8$& $(2,2)$ & $4$ & $25$\\
	$9$& $(3,0)$, $(0,3)$ & $4$ & $29$\\
	$10$& $(3,1)$, $(1,3)$ & $8$ & $37$\\
	$13$& $(3,2)$, $(2,3)$ & $8$ & $45$\\
	$16$& $(4,0)$, $(0,4)$ & $4$ & $49$\\
	$17$& $(4,1)$, $(1,4)$ & $8$ & $57$\\
	\hline
	\end{tabular}
	\caption{The first $57$ eigenvalues\label{tabEigVal}}
\end{table}
 In particular, we find 
\[\frac{\lambda_{49}(\mathbb{T}^2)}{4\pi^2}=16\]
and
\[\frac{\lambda_{50}(\mathbb{T}^2)}{4\pi^2}=17\,.\]
Therefore, for all $k \ge 50\,$,  $\nu(\lambda_k(\mathbb{T}^2))\ge 50\,$, and thus $\lambda_k(\mathbb{T}^2)$ is not Courant-sharp.

According to Table \ref{tabEigVal}, it remains to test Inequality \eqref{eqIneqCourantSharp} for $\lambda=\lambda_k(\mathbb{T}^2)$ with $k \in \{6,10,14,22,26,30,38,46\}$ in order to conclude the proof of Theorem \ref{thmPlejielTorus}. We can rewrite \eqref{eqIneqCourantSharp} in the more convenient form 
\[\frac{j_{0,1}^2}{4\pi}\le \frac{\lambda}{4\nu(\lambda)\pi^2}\,.\]
\begin{table}
\centering
	\begin{tabular}{|c|c|c|c|c|c|c|c|c|}
	\hline
	$k$ & $6$ & $10$ & $14$ & $22$ & $26$ & $30$ & $38$ & $46$\\
	\hline
	$\frac{\lambda_{k}(\mathbb{T}^2)}{4k\pi^2}$       & $0.3333$ & $0.4000$ & $0.3571$ & $0.3636$ & $0.3462$ & $0.3333$ & $0.3421$ & $0.3478$\\
	\hline
	\end{tabular}
	\caption{Table of ratios\label{tabRatio}}
\end{table}
Table \ref{tabRatio} shows the ratio 
\[\frac{\lambda_{k}(\mathbb{T}^2)}{4k\pi^2}\]
as a function of $k\,$. Since 
\[\frac{j_{0,1}^2}{4\pi}\simeq 0.4602\,,\]
the inequality is not satisfied for any of the $k$ in consideration. This implies that only the eigenvalues $\lambda_1(\mathbb{T}^2)=0$ and $\lambda_2(\mathbb{T}^2)=\lambda_3(\mathbb{T}^2)=\lambda_4(\mathbb{T}^2)=\lambda_5(\mathbb{T}^2)=4\pi^2$ are Courant-sharp.

{\small

}

\end{document}